\theoremstyle{definition}
\newtheorem{theorem}{Theorem}
\newtheorem{corollary}[theorem]{Corollary}
\newtheorem{proposition}[theorem]{Proposition}
\newtheorem{lemma}[theorem]{Lemma}
\newtheorem{definition}[theorem]{Definition}
\newtheorem{remark}[theorem]{Remark}
\date{}
\newcommand{\numberset}{\mathbb}
\newcommand{\N}{\numberset{N}}
\newcommand{\F}{\numberset{F}}
\newcommand{\Pro}{\numberset{P}}
\newcommand{\Ol}{\mathcal{O}}
\begin{document}

\author{Edoardo Ballico$^1$}
\address{Department of Mathematics, University of Trento\\ Via Sommarive 14,
38123 Povo (Trento), Italy}

\author{Alberto Ravagnani$^2$}
\address{Institut de Math\'{e}matiques, Universit\'{e} de Neuch\^{a}tel\\Rue
Emile-Argand 11, CH-2000 Neuch\^{a}tel, Switzerland}
\email{$^1$edoardo.ballico@unitn.it}
\email{$^2$alberto.ravagnani@unine.ch}

\title[]{Projective normality of Artin-Schreier curves}

\subjclass{14H50; 11T99}
\keywords{Artin-Schreier curve; projective normality}

\maketitle

\providecommand{\bysame}{\leavevmode\hbox to3em{\hrulefill}\thinspace}

\begin{abstract} In this paper we study the projective normality of certain
Artin-Schreier curves $Y_f$ defined over a field $\F$ of characteristic $p$ by
the equations $y^q+y=f(x)$, $q$ being a power of $p$ and $f\in \F[x]$ being a
polynomial in $x$ of degree $m$, with $(m,p)=1$. Many $Y_f$ curves are singular
and so, to be precise, here we study the projective normality of appropriate projective models
of their normalizations.

\end{abstract}

\section{Introduction} \label{intr}
Let $\Pro^2$ denote the projective plane over an arbitrary field $\F$ of
characteristic $p$, and let $q:=p^k$ be a power of $p$ ($k>0$). Denote by $Y_f
\subseteq \Pro^2$ the curve defined over $\F$ by the equation
$y^q+y=f(x)$, where $f(x) \in \F[x]$ is a polynomial of degree $m>0$. Assume
 $(m,p)=1$. The function field $\F(x,y)$ is deeply studied in \cite{Sti},
Proposition 6.4.1. In particular, the function $x$ is known to have only one
pole $P_\infty$. Denote by $\pi:C_f \to Y_f$ the normalization of $Y_f$ (which is known
to be a bijection) and set $Q_\infty:=\pi^{-1}(P_\infty)$. For each $s \ge 0$ the
(pull-backs of the) monomials $x^iy^j$ such that
$$i \ge 0, \ \ 0 \le j \le q-1, \ \ qi+mj \le s$$ form a basis of the vector
space $L(sQ_\infty)$ (see \cite{Sti}, Proposition 6.4.1 again). The genus, $g$, of
the curve $Y_f$ (which is by definition the genus of the normalization $C_f$) is known to be
$g=(m-1)(q-1)/2$. In this paper we study the projective normality of certain
embeddings ($X_f$) of $C_f$ curves into suitable projective spaces. Let us
briefly discuss the outline of the paper.
\begin{itemize}
\item Section \ref{prel} recalls a basic definition and contains a preliminary lemma.
 \item In Section \ref{s1} we take an arbitrary integer $m \ge 2$ which divides $q-1$ and consider the
curve $C_f$ embedded by $L(qQ_\infty)$ into the projective space $\Pro^r$,
$r:=(q-1)/m+1$. We show that this curve is in any case projectively normal and we
compute the dimension of the space of quadric hypersurfaces containing it. 
\item In Section \ref{s2} we pick out an arbitrary integer $m \ge 2$
which divides $tq+1$ ($t$ being any positive integer) and consider the 
curve $C_f$ embedded by $L((tq+1)Q_\infty)$. We 
show that if $(tq-1)/m \le q-1$ and $f(x)=x^m$ then the cited curve is in any case
projectively normal. 
\end{itemize}

Notice that the curve $C_f$ and the line bundles $\mathcal{L}(qQ_\infty)$,
$\mathcal{L}((tq+1)Q_\infty)$ are defined over
any field $\F \supseteq \mathbb {F}_p$ containing the coefficients
of the polynomial $f(x)$. Hence when $f(x) =x^m$ any field of characteristic $p$
may be used.

\section{Preliminaries}
\label{prel}

In this section we recall a basic definition and prove a general
lemma. The result provides in fact sufficient conditions for the projective
 normality of a $C_f$ curve 
as defined in the Introduction.

\begin{definition} \label{defpn}
 A smooth curve $X \subseteq \Pro^r$ defined over a field $\F$ is said to be
\textbf{projectively normal} if for any integer $d \ge 2$  the restriction map
$$\rho_{d,X}:S^d (H^0(\Pro^r, \mathcal{O}_{\Pro^r}(1))) \to H^0(X,\Ol_X(d))$$ is
surjective, $S^d$ denoting the symmetric $d$-power of the tensor product.
\end{definition}

\begin{lemma}\label{trick}
 Consider a $C_f$ curve as in Section \ref{intr} ($q$, $m$ and $f$ being as
in the definitions). Set $C:=C_f$. Fix integers $a,b,e$ such that $a \ge 0$, $b
\ge 0$, $a+b>0$ and $e \ge aq+bm+(m-1)(q-1)-1$. The multiplication map $$\mu:L((aq+bm)Q_\infty)
\otimes L(eQ_\infty) \to L((e+aq+bm)Q_\infty)$$ is surjective.
\end{lemma}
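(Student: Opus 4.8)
The plan is to work entirely with the explicit monomial basis recalled in Section~\ref{intr}. Write $N:=aq+bm$ and observe that $\mathrm{Im}(\mu)$ is the $\F$-span of the products $(x^{i_1}y^{j_1})(x^{i_2}y^{j_2})$, where $x^{i_1}y^{j_1}$ runs over the basis of $L(NQ_\infty)$ and $x^{i_2}y^{j_2}$ over the basis of $L(eQ_\infty)$. Since $\mathrm{Im}(\mu)\subseteq L((e+N)Q_\infty)$, it suffices to show that every basis monomial $x^Iy^J$ of $L((e+N)Q_\infty)$ (that is, $0\le J\le q-1$ with $qI+mJ\le e+N$) lies in $\mathrm{Im}(\mu)$. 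The structural facts I would record first are that $v_{Q_\infty}(x^iy^j)=-(qi+mj)$, that for $0\le j\le q-1$ these pole orders are pairwise distinct (here $(m,p)=1$ gives $\gcd(m,q)=1$), and that the set of pole orders is exactly the numerical semigroup $\Gamma:=\langle q,m\rangle$, whose Frobenius number equals $qm-q-m=(m-1)(q-1)-1=2g-1$.

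Next I would set up a leading-term induction on the pole order $S:=qI+mJ$. Given a product $w:=x^{i_1+i_2}y^{j_1+j_2}$ of two basis monomials of orders $S_1=qi_1+mj_1\le N$ and $S_2=qi_2+mj_2\le e$, additivity of the valuation (using $v_{Q_\infty}(x)=-q$, $v_{Q_\infty}(y)=-m$) gives $v_{Q_\infty}(w)=-(S_1+S_2)$, so $w$ has pole order exactly $S_1+S_2$. Rewriting $w$ in the monomial basis by means of the relation $y^q=f(x)-y$, and using that $L((S_1+S_2)Q_\infty)$ contains a unique basis monomial of pole order $S_1+S_2$, I conclude that the coefficient of that monomial in $w$ is automatically nonzero. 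Hence, if $S$ admits a splitting $S=S_1+S_2$ as above, the corresponding product $w$ equals $c\,x^Iy^J$ plus terms of strictly smaller pole order with $c\neq0$; since those lower terms are basis monomials of order $<S$, hence in $\mathrm{Im}(\mu)$ by the inductive hypothesis, it follows that $x^Iy^J\in\mathrm{Im}(\mu)$.

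The numerical heart of the argument is therefore the splitting: \emph{every} $S\in\Gamma$ with $S\le e+N$ can be written $S=S_1+S_2$ with $S_1\in\Gamma\cap[0,N]$ and $S_2\in\Gamma\cap[0,e]$, after which canonical representatives of $S_1$ and $S_2$ supply the product $w$. If $S\le e$, take $S_1=0$ and $S_2=S$. If $S>e$, the hypothesis $e\ge N+2g-1$ forces $S\ge N+2g$, so I take $S_1=N$ (which lies in $\Gamma$ since $N=aq+bm$, and is positive because $a+b>0$); then $S_2=S-N\ge 2g>2g-1$, and since every integer exceeding the Frobenius number $2g-1$ belongs to $\Gamma$ we get $S_2\in\Gamma$, while $S\le e+N$ gives $S_2\le e$. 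This is exactly, and sharply, where the bound $e\ge aq+bm+(m-1)(q-1)-1$ enters.

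The step I expect to require the most care is the leading-term bookkeeping: one must be sure that rewriting $w$ through $y^q=f(x)-y$ never destroys the top-order contribution. The valuation computation above is what makes this clean—the order of $w$ is $S_1+S_2$ on the nose, and uniqueness of the order-$S$ basis monomial forces a nonzero leading coefficient—so the reduction need not be tracked explicitly. With that observation the induction is routine, and the splitting lemma keyed to the Frobenius number closes the argument.
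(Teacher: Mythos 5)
Your proof is correct, and it takes a genuinely different route from the paper's. The paper proves Lemma \ref{trick} as an instance of the base-point free pencil trick: since the Weierstrass semigroup at $Q_\infty$ contains $q$ and $m$, the bundle $\mathcal{O}_C((aq+bm)Q_\infty)$ is spanned; a two-dimensional spanning subspace $V$ yields the exact sequence
$$0 \to \mathcal{O}_C((e-aq-bm)Q_\infty) \to \mathcal{O}_C(eQ_\infty)^{\oplus 2} \to \mathcal{O}_C((e+aq+bm)Q_\infty) \to 0,$$
and the hypothesis $e \ge aq+bm+(m-1)(q-1)-1$ forces $e-aq-bm > 2g-2$, so $h^1$ of the kernel vanishes and the map on global sections is surjective. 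You instead work entirely with the explicit monomial basis: distinct basis monomials $x^iy^j$ ($0\le j\le q-1$) have distinct pole orders, the semigroup of pole orders is $\langle q,m\rangle$ with Frobenius number $qm-q-m=2g-1$, and your splitting $S=S_1+S_2$ together with induction on pole order gives surjectivity; your leading-coefficient argument via the valuation at $Q_\infty$ is sound, since a vanishing coefficient on the unique order-$S$ basis monomial would force $w$ to have pole order strictly less than $S$. Notably, both arguments consume the hypothesis on $e$ at the same numerical point (yours as ``$S_2\ge 2g$ exceeds the Frobenius number,'' the paper's as ``$\deg > 2g-2$ kills $h^1$''). What your approach buys: it is elementary (no cohomology), it stays over the base field $\F$ throughout (the paper's spanning pencil $V$ is only defined over $\overline{\F}$, so strictly speaking the paper relies on surjectivity descending under base change), and it is stylistically of a piece with the monomial bookkeeping the paper itself uses to prove Theorems \ref{teo1} and \ref{te2}. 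What the paper's approach buys: brevity and generality --- the pencil trick needs only a spanned line bundle and an $h^1$ vanishing, with no dependence on the semigroup being generated by two elements or on having an explicit monomial basis.
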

\begin{proof}
 As we will explain below, this is just a particular case of the base-point free
pencil trick (\cite{acgh}, p. 126). Since
the Weierstrass semigroup of non-gaps of $Q_\infty$ contains $m$ and $q$, the
line bundle
$\mathcal {O}_{C}((aq+bm)Q_\infty )$ is spanned by its global sections. Since
$(aq+bm)>0$, we have
$h^0(C,\mathcal {O}_{C}((aq+bm)Q_\infty )) \ge 2$. Hence there is a
two-dimensional linear subspace $V \subseteq H^0(C,\mathcal
{O}_C((aq+bm)Q_\infty))$ (defined over $\overline{\mathbb {F}}$) 
 spanning $\mathcal {O}_{C}((aq+bm)Q_\infty)$. Taking
a basis, say $\{w_1, w_2\}$, of $V$, we get
an exact sequence of line bundles on $C$ (over $\overline{\mathbb {F}}$):
\begin{equation*}
0 \to \mathcal {O}_{C}((e-aq-bm)Q_\infty )\to \mathcal {O}_{C}(eQ_\infty
)^{\oplus 2}\stackrel{\phi}{\to} \mathcal {O}_{C}((e+aq+bm)Q_\infty )\to 0
\end{equation*}
in which $\phi$ is induced by the multiplication by the column vector
$(w_1,w_2)$. By assumption $e-aq -bm >2g-2$. Hence $h^1(C,\mathcal
{O}_{C}((e-aq-bm)Q_\infty ))=0$.
It follows that the map $$\psi : H^0(C,\mathcal {O}_{C}(eQ_\infty ))^{\oplus 2}\to
H^0(C,\mathcal {O}_{C}((e+aq+bm)Q_\infty ))$$ induced in cohomology by the map
$\phi$ of the previous exact sequence is surjective. Since $V\subseteq
H^0(C,\mathcal {O}_{C}((aq+bm)Q_\infty ))$, $\mu$ is surjective.
\end{proof}

In the following sections the previous result will be applied to appropriate
 embeddings of $C_f$ curves.

\section{The case $m|q-1$}
\label{s1}

Assume that $m\ge 2$ is an integer which divides $q-1$ and set $c:=(q-1)/m$. 
If $c=1$ then $Y_f$ is
a smooth plane curve and it is of course projectively normal. Hence we can focus
on the case $c \ge 2$. Notice that the point $P_\infty \in Y_f(\F)$ defined in the Introduction
is the only singular point of $Y_f$, for any choice of
$f(x)$ as in the definitions. We have also an identity of vector spaces
$H^0(C_f, \pi^*(\Ol_{Y_f}(1))) = L(qQ_\infty)$ and by the results stated
in the Introduction it can be easily seen that a basis of them is $\{
1,x,y,...,y^c\}$ ($c \le q-1$ here). Since the linear system spanned by $\{1,x,y\}$
is base-point free (it is also the linear system inducing the composition of
$\pi$ with the inclusion of $Y_f$ in the plane) then also the complete linear
system $L(qQ_\infty)$ is base-point free. Hence it defines a morphism
$\varphi:C_f \to \Pro^r$, $r:=c+1$.

\begin{remark} \label{rmmm}
 Since $\pi$ is injective and has invertible
differential at any point of $C_f \setminus \{ Q_\infty\}$, then also $\varphi$  is
injective with non-zero differential at any point of $C_f \setminus \{
Q_\infty\}$. Moreover, the differential of $\varphi$ is
non-zero even in $Q_\infty$. Indeed, since $L(qQ_\infty)$ has no base-points, 
in order to prove that $\varphi$ has 
 non-zero differential at $Q_\infty$ it is sufficient to
prove that $$h^0(C_f,(q-2)Q_\infty) = h^0(C_f,qQ_\infty )-2$$ 
(\cite{Ha}, Chapter IV, proof of Proposition 3.1). To do this, we may notice
that a basis of $L(qQ_\infty)$ is given 
by a basis of 
$L((q-2)Q_\infty)$ and the monomials $x$ and $y^c$ (see the Introduction). The
result follows.
\end{remark}

By the previous remark,  $\varphi$ is in fact an embedding
of $C_f$ into $\Pro^r$. Set $X_f:=\varphi(C_f)$ and, for any integer $s\ge 1$, denote by
$$\mu_s:L(qQ_\infty) \otimes L(sqQ_\infty) \to L(q(s+1)Q_\infty)$$ the
multiplication map.

\begin{lemma} \label{key1}
 If $\mu_s$ is surjective for all $s \ge 1$ then $X_f$ is projectively normal.
\end{lemma}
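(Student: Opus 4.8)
The plan is to show that projective normality, which by Definition \ref{defpn} requires surjectivity of $\rho_{d,X_f}$ for every $d\ge 2$, follows from the surjectivity of the single multiplication maps $\mu_s$. The bridge between these two formulations is the standard observation that, since $X_f$ is embedded by the complete linear system $L(qQ_\infty)$, we have $H^0(\Pro^r,\Ol_{\Pro^r}(1))\cong L(qQ_\infty)$ (this is exactly the identity $H^0(C_f,\pi^*\Ol_{Y_f}(1))=L(qQ_\infty)$ recalled above, together with completeness of the system), and likewise $\Ol_{X_f}(d)$ pulls back to $\Ol_{C_f}(dqQ_\infty)$, so that $H^0(X_f,\Ol_{X_f}(d))\cong L(dqQ_\infty)$. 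Under these identifications the restriction map $\rho_{d,X_f}$ becomes the iterated multiplication map $S^d\big(L(qQ_\infty)\big)\to L(dqQ_\infty)$, $w_1\otimes\cdots\otimes w_d\mapsto w_1\cdots w_d$.

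Next I would reduce the surjectivity of this iterated multiplication map to the two-factor maps $\mu_s$ by an induction on $d$. The key factorization is that the degree-$d$ multiplication map factors as
\begin{equation*}
S^d\big(L(qQ_\infty)\big) \twoheadrightarrow L(qQ_\infty)\otimes S^{d-1}\big(L(qQ_\infty)\big) \longrightarrow L(qQ_\infty)\otimes L((d-1)qQ_\infty) \stackrel{\mu_{d-1}}{\longrightarrow} L(dqQ_\infty),
\end{equation*}
where the first arrow is the canonical surjection and the middle arrow is $\mathrm{id}\otimes\rho_{d-1}$ with $\rho_{d-1}\colon S^{d-1}(L(qQ_\infty))\to L((d-1)qQ_\infty)$ the multiplication map in degree $d-1$. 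The base case $d=1$ is the tautological isomorphism, and for $d=2$ the map is $\mu_1$ itself. Assuming inductively that $\rho_{d-1}$ is surjective, the middle arrow becomes surjective after tensoring with $L(qQ_\infty)$ (tensoring with a fixed vector space preserves surjectivity), and then composing with the surjective $\mu_{d-1}$ gives surjectivity of $\rho_{d}$. Thus if all the $\mu_s$ are surjective, every $\rho_{d,X_f}$ is surjective, which is precisely projective normality.

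I expect the main subtlety to be bookkeeping rather than genuine difficulty: one must be careful that $\mu_{d-1}$ in the last arrow is exactly the map called $\mu_s$ in the statement (with $s=d-1$), and that the whole composite equals $\rho_{d,X_f}$ after the canonical identifications of sections. One technical point deserving attention is the field of definition. The surjectivity of a map of $\F$-vector spaces can be checked after the faithfully flat base change to $\overline{\F}$, so there is no loss in carrying out the cohomological arguments geometrically over the algebraic closure, exactly as was done in the proof of Lemma \ref{trick}; I would remark on this explicitly to keep the argument clean. With that caveat handled, the proof is a short formal deduction, and the real content lies entirely in later verifying the hypothesis that each $\mu_s$ is surjective.
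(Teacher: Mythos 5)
Your overall strategy---identify $\rho_{d,X_f}$ with the multiplication map $S^d(L(qQ_\infty))\to L(dqQ_\infty)$ via the complete embedding, then induct on $d$ using the two-factor maps $\mu_s$---is the paper's strategy, and your identifications of source and target are correct. However, the pivotal step fails as written: the first arrow of your factorization, the claimed ``canonical surjection'' $S^d(L(qQ_\infty))\twoheadrightarrow L(qQ_\infty)\otimes S^{d-1}(L(qQ_\infty))$, does not exist. Writing $V:=L(qQ_\infty)$ and $n:=\dim V\ge 2$, one has
$$\dim S^d(V)=\binom{n+d-1}{d}\ <\ n\binom{n+d-2}{d-1}=\dim\left(V\otimes S^{d-1}(V)\right),$$
(already $3<4$ for $n=d=2$), so no surjection can exist; the canonical surjection goes the other way, $V\otimes S^{d-1}(V)\twoheadrightarrow S^d(V)$. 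Nor can you substitute the comultiplication $\Delta\colon S^d(V)\to V\otimes S^{d-1}(V)$, $v_1\cdots v_d\mapsto\sum_i v_i\otimes v_1\cdots \widehat{v_i}\cdots v_d$: it is not surjective, and composing it with $\mu_{d-1}\circ(\mathrm{id}\otimes\rho_{d-1})$ yields $d\cdot\rho_d$, which is identically zero in characteristic $p$ whenever $p\mid d$---precisely the setting of this paper. This is why the paper is careful to present $S^t$ as a \emph{quotient} of the tensor power in arbitrary characteristic.

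The repair is short and lands exactly on the paper's proof. Let $\sigma_d\colon V\otimes S^{d-1}(V)\twoheadrightarrow S^d(V)$ be the canonical surjection. Then
$$\rho_d\circ\sigma_d=\mu_{d-1}\circ(\mathrm{id}\otimes\rho_{d-1}),$$
since both sides send $w\otimes\sigma$ to the product of the corresponding sections. By your induction hypothesis $\rho_{d-1}$ is surjective, hence so is $\mathrm{id}\otimes\rho_{d-1}$, and $\mu_{d-1}$ is surjective by assumption; thus $\rho_d\circ\sigma_d$ is surjective, and therefore $\rho_d$ is surjective (if $g\circ f$ is onto, $g$ is onto). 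The paper runs the same induction one level up, with the quotient map $\eta_t\colon V^{\otimes t}\twoheadrightarrow S^t(V)$ and the tensor multiplication maps $\tau_t=\rho_t\circ\eta_t$, using $\tau_t=\mu_{t-1}\circ(\mathrm{id}\otimes\tau_{t-1})$; the two formulations are equivalent. So your proof needs only this one arrow reversed---but as written, the deduction ``composition of surjections'' rests on a map that does not exist, and in characteristic $p$ there is no way around using the quotient direction.
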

\begin{proof}
 Fix an integer $t\ge 2$ and assume that $\mu _s$ is surjective
for all $s\in \{1,\dots ,t-1\}$. We need to prove the surjectivity of
the linear map $\rho _t: S^t(L(qQ_\infty )) \to L(tqQ_\infty )$. Notice
that in arbitrary characteristic $S^t(L(qQ_\infty ))$ is defined as a suitable
quotient
of $L(qQ_\infty )^{\otimes t}$ (\cite{e}, \S A2.3), i.e. $\tau _t = \rho
_t\circ \eta _t$,
where $\tau _t: L(qQ_\infty )^{\otimes t} \to L(tqQ_\infty )$ is the tensor
power map
and $\eta _t: L(qQ_\infty )^{\otimes t} \to S^t(L(qQ_\infty ))$ is a
surjection. Hence $\rho _t$
is surjective if and only if $\tau _t$ is surjective. Since $\tau _2 = \mu _1$,
$\tau _2$ is surjective.
So assume $t>2$ and that $\tau _{t-1}$ is surjective. Since $\tau _{t-1}$ and
$\mu _{t-2}$ are surjective, $\tau _t$ is surjective.
\end{proof}

\begin{proposition} \label{pr_grandi}
 If $s \ge m$ then $\mu_s$ is surjective.
\end{proposition}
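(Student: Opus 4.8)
The plan is to recognize $\mu_s$ as a special instance of the multiplication map already treated in Lemma \ref{trick}. First I would set $a=1$, $b=0$, and $e=sq$, so that $L((aq+bm)Q_\infty)=L(qQ_\infty)$ and $L(eQ_\infty)=L(sqQ_\infty)$; with these choices the map appearing in Lemma \ref{trick} is exactly $\mu_s:L(qQ_\infty)\otimes L(sqQ_\infty)\to L(q(s+1)Q_\infty)$. It then suffices to verify the hypotheses $a\ge 0$, $b\ge 0$, $a+b>0$, and $e\ge aq+bm+(m-1)(q-1)-1$. The first three are immediate since $a=1$, so the entire content of the proof reduces to checking the numerical bound on $e$.

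Next I would simplify that bound. Substituting $a=1$, $b=0$ turns the requirement into $sq\ge q+(m-1)(q-1)-1$, and expanding $(m-1)(q-1)=mq-m-q+1$ collapses the right-hand side to $m(q-1)$. Thus $\mu_s$ is surjective as soon as $sq\ge m(q-1)$, equivalently $s\ge m-m/q$. The final step is to translate this into the clean hypothesis $s\ge m$: since $m\ge 2$ divides $q-1$ we have $2\le m\le q-1<q$, hence $0<m/q<1$ and therefore $m-1<m-m/q<m$. Consequently the least integer satisfying $s\ge m-m/q$ is exactly $m$, so the bound $sq\ge m(q-1)$ holds for every integer $s\ge m$, and Lemma \ref{trick} delivers the surjectivity of $\mu_s$ for all such $s$.

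The argument is short precisely because all the geometric input — the base-point-freeness of $\mathcal{O}_C((aq+bm)Q_\infty)$ coming from $m,q$ lying in the Weierstrass semigroup, together with the cohomology vanishing $h^1(C,\mathcal{O}_C((e-aq-bm)Q_\infty))=0$ forcing surjectivity — is already absorbed into Lemma \ref{trick}. There is no serious obstacle here; the only point I would check most carefully is the strict inequality $m<q$, since this is exactly what places $m-m/q$ strictly between $m-1$ and $m$ and thereby makes the integer threshold equal to $m$ rather than $m-1$. This is where the divisibility hypothesis $m\mid q-1$ (which gives $m\le q-1$) does its work, rather than the weaker $m\le q$.
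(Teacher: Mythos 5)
Your proof is correct and is essentially identical to the paper's: both apply Lemma \ref{trick} with $a=1$, $b=0$, $e=sq$, after verifying that $s\ge m$ gives $sq\ge q+(m-1)(q-1)-1=m(q-1)$. Your closing remark about needing $m<q$ concerns only the sharpness of the threshold (whether $s=m-1$ would also work), not the stated implication, since $sq\ge mq>m(q-1)$ holds for every $s\ge m$ regardless.
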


\begin{proof}
 If $s \ge m$ then $sq \ge q+(m-1)(q-1)-1$. Apply Lemma \ref{trick} by setting
$e:=sq$, $a:=1$ and $b:=0$.
\end{proof}

\begin{theorem}\label{teo1}
 The curve $X_f$ is projectively normal.
\end{theorem}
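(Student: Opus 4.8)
The plan is to invoke Lemma \ref{key1}, which reduces the theorem to proving that $\mu_s$ is surjective for every $s \ge 1$. Proposition \ref{pr_grandi} already disposes of the range $s \ge m$, so the entire remaining problem is to settle the finitely many cases $1 \le s \le m-1$. For these I would argue directly with the explicit monomial bases recalled in the Introduction, rather than through the cohomological device of Lemma \ref{trick}: indeed, as the proof of Proposition \ref{pr_grandi} shows, that lemma requires $sq \ge q+(m-1)(q-1)-1$, an inequality which fails precisely when $s<m$ (e.g. for $s=m-1$ it reads $(m-1)q \ge mq-m$, i.e. $m \ge q$, contrary to $m \le q-1$).

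First I would fix the bases. Recall that $L(qQ_\infty)$ has basis $\{1,x,y,\dots,y^c\}$ with $c=(q-1)/m$, while $L(\ell Q_\infty)$ has basis the set of monomials $x^iy^j$ with $i\ge 0$, $0\le j\le q-1$ and $qi+mj\le \ell$. To prove $\mu_s$ surjective it suffices to show that every basis monomial $x^ay^b$ of $L((s+1)qQ_\infty)$ lies in the image, that is, can be written as the product of a generator of $L(qQ_\infty)$ and a basis monomial of $L(sqQ_\infty)$. The key structural observation is that, since every target monomial satisfies $b\le q-1$, no power $y^q$ is ever produced; hence the Artin--Schreier relation $y^q=f(x)-y$ plays no role and the verification is purely combinatorial. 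Moreover all monomials involved are $\F$-rational, so surjectivity of $\mu_s$ can be checked directly over $\F$ without passing to $\overline{\F}$.

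Then I would split into three cases according to the value of $qa+mb$. If $qa+mb\le sq$, then $x^ay^b$ already belongs to $L(sqQ_\infty)$ and equals $1\cdot x^ay^b$. If $qa+mb=(s+1)q$, then $q\mid mb$; since $\gcd(q,m)=1$ (as $(m,p)=1$) and $0\le b\le q-1$, this forces $b=0$ and $a=s+1$, and then $x^{s+1}=x\cdot x^s$ with $x^s\in L(sqQ_\infty)$. The substantial case is $sq<qa+mb\le (s+1)q-1$, where I set $k:=\lceil (qa+mb-sq)/m\rceil$ and write $x^ay^b=y^k\cdot(x^ay^{b-k})$.

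The main obstacle --- in fact the only point requiring care --- is to verify that this $k$ is admissible, namely that $1\le k\le c$, that $b-k\ge 0$, and that $x^ay^{b-k}\in L(sqQ_\infty)$. The lower bound $k\ge 1$ is clear since $qa+mb-sq\ge 1$. The bound $k\le c$ follows from $qa+mb-sq\le q-1=mc$, which is exactly the inequality $qa+mb\le (s+1)q-1$ defining this case. The bound $k\le b$ holds because that same inequality forces $a\le s$, whence $(qa+mb-sq)/m=q(a-s)/m+b\le b$ and, $b$ being an integer, $\lceil (qa+mb-sq)/m\rceil\le b$. Finally the membership $x^ay^{b-k}\in L(sqQ_\infty)$ amounts to $qa+m(b-k)\le sq$, i.e. $mk\ge qa+mb-sq$, which holds by the very choice of $k$. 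This exhausts the range $1\le s\le m-1$ and, together with Proposition \ref{pr_grandi} and Lemma \ref{key1}, proves the theorem.
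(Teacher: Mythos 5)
Your proposal is correct, and it follows the same skeleton as the paper's proof: reduce via Lemma \ref{key1} to the surjectivity of all $\mu_s$, dispose of $s\ge m$ by Proposition \ref{pr_grandi}, and for $1\le s<m$ show that every basis monomial $x^ay^b$ of $L((s+1)qQ_\infty)$ factors as a product of a basis element of $L(qQ_\infty)$ with a basis element of $L(sqQ_\infty)$. Where you genuinely diverge is in the case decomposition of that last step. The paper splits on $a>0$ versus $a=0$: when $a>0$ it factors out $x$ (since $q(a-1)+mb\le sq$), when $a=0$ it factors out $y^c$, and it kills the boundary subcase $a=0$, $mb=(s+1)q$ by an integrality contradiction ($(s+1)/m\in\N$ forces $s=m-1$, whence $b=q$, impossible). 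You instead split on the value of $qa+mb$: the extreme case $qa+mb=(s+1)q$ is shown, via $\gcd(q,m)=1$, to force $(a,b)=(s+1,0)$ and is settled by $x^{s+1}=x\cdot x^s$; the remaining range $sq<qa+mb\le(s+1)q-1$ is handled uniformly by factoring out $y^k$ with the adaptive exponent $k=\lceil(qa+mb-sq)/m\rceil$, and your verification of admissibility ($1\le k\le c$, $k\le b$ via $a\le s$, and $qa+m(b-k)\le sq$ by the choice of $k$) is complete and correct. Your route merges the paper's two middle bullets into a single uniform factorization at the cost of the ceiling bookkeeping, and it identifies explicitly (rather than excludes) the monomial sitting on the boundary $qa+mb=(s+1)q$; the paper's fixed factors $x$ and $y^c$ make each individual verification a one-line inequality. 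Your side remarks are also sound: the exponent bound $b\le q-1$ means the relation $y^q+y=f(x)$ never intervenes, and Lemma \ref{trick} indeed cannot reach any $s<m$, since with $a=1$, $b=0$ it demands $sq\ge m(q-1)$, which fails for $s\le m-1$ because $m\le q-1$.
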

\begin{proof}
 By Lemma \ref{key} it is enough to prove that $\mu_s$ is surjective for all $s
\ge 1$. The case $s \ge m$ is covered by Proposition \ref{pr_grandi}. So  let us assume $1
\le s <m$. Let $i,j$ be integers such that $i \ge 0$, $0 \le j \le q-1$ and
$qi+mj \le (s+1)q$.
\begin{itemize}
 \item If $qi+mj \le sq$ then $x^iy^j$ is in the image of $\mu_s$ because $1 \in
L(qQ_\infty)$.
\item If $sq < qi+mj \le (s+1)q$ and $i>0$ then $x^{i-1}y^j \in L(sqQ_\infty)$.
Since $x \in L(qQ_\infty)$ then $x^iy^j$ is in the image of $\mu_s$.
\item If $i=0$ and $sq < mj < (s+1)q$ then $j>sq/m=s(c+1/m)>c$ and $mj \le
(s+1)q-1$. By the latter inequality we get $m(j-c) \le (s+1)q-1-mc$. Observe
that $(s+1)q-1-mc = sq$ and so $m(j-c) \le sq$. This proves that $y^{j-c} \in
L(sqQ_\infty)$. Finally, $\mu_s(y^c \otimes y^{j-c})=y^j$.
\item If $i=0$ and $mj=(s+1)q$ then $j=(s+1)q/m=(s+1)(c+1/m)=(s+1)c+(s+1)/m$.
Since $0 \le j \le q-1$ is a nonnegative integer, we must have $(s+1)/m \in \N$.
Since $1 \le s <m$ we get $s=m-1$. It follows $mj=mq$ and $j=q$, a
contradiction.
\end{itemize}
This proves the theorem.
\end{proof}

\begin{corollary}\label{coro1}
Assume $m \ge 3$. The curve $X_f\subseteq \Pro^{r}$ is contained into
$\binom{c+3}{2}-3c-3$ linearly independent quadric hypersurfaces.
\end{corollary}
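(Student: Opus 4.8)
The plan is to realize the space of quadrics through $X_f$ as the kernel of the degree-two restriction map, and to compute its dimension by combining the projective normality already established in Theorem \ref{teo1} with an explicit count of a monomial basis of $L(2qQ_\infty)$. First I would set $V:=H^0(\Pro^r,\Ol_{\Pro^r}(1))$ and observe that a quadric hypersurface of $\Pro^r$ is an element of $S^2(V)$, and that it contains $X_f$ if and only if it lies in the kernel of the restriction map $\rho_{2,X_f}\colon S^2(V)\to H^0(X_f,\Ol_{X_f}(2))$ of Definition \ref{defpn}. Thus the number of linearly independent quadrics containing $X_f$ is exactly $\dim\ker\rho_{2,X_f}$. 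Since $\varphi$ embeds $C_f$ by the complete linear system $L(qQ_\infty)$ we have $\Ol_{X_f}(2)=\varphi_*\Ol_{C_f}(2qQ_\infty)$, so the target is $H^0(X_f,\Ol_{X_f}(2))=L(2qQ_\infty)$, independently of any normality hypothesis. By Theorem \ref{teo1} the curve $X_f$ is projectively normal, so $\rho_{2,X_f}$ is surjective (Definition \ref{defpn} with $d=2$), and hence
$$\dim\ker\rho_{2,X_f}=\dim S^2(V)-\dim L(2qQ_\infty).$$

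Next I would evaluate the two terms on the right. Because the embedding goes into $\Pro^r$ with $r=c+1$, we have $\dim V=r+1=c+2$, whence $\dim S^2(V)=\binom{c+3}{2}$, this being the dimension of the degree-two part of a polynomial ring in $c+2$ variables, which is valid in any characteristic (here $S^2$ is the quotient construction of \cite{e}, \S A2.3, as in the proof of Lemma \ref{key1}). For the second term I would use the monomial basis of $L(2qQ_\infty)$ recalled in the Introduction, namely the pull-backs of the $x^iy^j$ with $i\ge 0$, $0\le j\le q-1$ and $qi+mj\le 2q$, and count these lattice points by slicing according to the value of $i$. The case $i\ge 3$ is impossible since $3q>2q$; for $i=2$ only $j=0$ survives; for $i=1$ the condition $mj\le q$ gives $j\le c$, i.e. $c+1$ monomials; and for $i=0$ the condition $mj\le 2q$ gives $j\le 2c$, i.e. $2c+1$ monomials. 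Summing yields $\dim L(2qQ_\infty)=(2c+1)+(c+1)+1=3c+3$, and substituting gives $\binom{c+3}{2}-3c-3$, as claimed.

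The only delicate point is the $i=0$ slice, where one must check that the integer part of $2q/m$ equals $2c$ and that the constraint $j\le q-1$ is inactive. This is precisely where the hypothesis $m\ge 3$ enters: writing $q=mc+1$ one finds $2q/m=2c+2/m$ with $0<2/m<1$, so $\lfloor 2q/m\rfloor=2c$, while $2c=2(q-1)/m\le 2(q-1)/3<q-1$ shows the bound $j\le q-1$ imposes nothing further. I expect the main (and only modest) obstacle to be exactly this boundary bookkeeping; the conceptual content is supplied for free by the surjectivity of $\rho_{2,X_f}$ coming from Theorem \ref{teo1}. It is worth stressing that Riemann--Roch alone would not deliver the count directly, since for $m\ge 4$ one may have $2q\le 2g-2$ and $h^1(C_f,\Ol_{C_f}(2qQ_\infty))$ need not vanish; it is the explicit monomial basis that makes the computation of $\dim L(2qQ_\infty)$ unconditional.
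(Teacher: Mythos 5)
Your proof is correct and follows essentially the same route as the paper: surjectivity of the degree-two restriction map from Theorem \ref{teo1}, the identification $H^0(X_f,\Ol_{X_f}(2))=L(2qQ_\infty)$, the monomial count $\dim L(2qQ_\infty)=3c+3$, and the subtraction $\binom{c+3}{2}-3c-3$. The only difference is that you spell out the lattice-point count and the precise role of the hypothesis $m\ge 3$ (namely $\lfloor 2q/m\rfloor=2c$), which the paper dismisses with ``we can easily check''; this is a welcome elaboration, not a different argument.
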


\begin{proof}
 In the notations of Definition \ref{defpn} set $X:=X_f$ and $d:=2$. Define
$r:=c+1$. By Theorem \ref{teo1} the restriction map 
$$\rho_{2,X_f}: S^2(H^0(\Pro^r,\Ol_{\Pro^r}(1))) \to H^0(X_f, \Ol_{X_f}(2))$$ is
surjective. Hence, in particular, the restriction map

$$\rho: H^0(\Pro^r,\Ol_{\Pro^r}(2)) \to H^0(X_f, \Ol_{X_f}(2))$$ is surjective.
Since $m \ge 3$ (by assumption) we can easily check that a basis of the vector
space $L(2qQ_\infty)$ consists of the following monomials:  
$$\{ 1,y,...,y^{2c},x,xy,...,xy^{c},x^2 \}.$$
Hence $h^0(X_f,\Ol_{X_f}(2))=\dim_{\F} \ L(2(q-1)Q_\infty)=3c+3$. The kernel of
$\rho$ is exactly the space of the quadrics in $\Pro^r$ vanishing on $X_f$. By
the surjectivity of $\rho$ we easily deduce its dimension:
$$\dim_{\F}
H^0(\Pro^r,\mathcal{I}_{X_f}(d))=\binom{r+2}{2}-(3c+3)=\binom{c+3}{2}-3c-3.$$
The result follows.
\end{proof}

\section{The case $m|tq+1$}\label{s2}
Pick out any integer $t\ge 1$ and assume that $m \ge 2$ is an integer dividing $tq+1$.
Set $c:=(tq+1)/m$. As in Remark \ref{rmmm}, it can be checked
that $L((tq+1)Q_\infty)$ defines an embedding, say $\varphi$, of $C_f$ into
$\Pro^r$, $r:=\dim L((tq+1)Q_\infty)-1$. Define $X_f:=\varphi(C_f)$. For any 
integer $s\ge 1$ denote by
$$\mu_s:L((tq+1)Q_\infty) \otimes L(s(tq+1)Q_\infty) \to
L((s+1)(tq+1)Q_\infty)$$ the multiplication map. As in Section \ref{s1}, the
projective normality of $X_f$ is controlled by the $\mu_s$ maps.

\begin{lemma} \label{key}
 If $\mu_s$ is surjective for all $s \ge 1$ then $X_f$ is projectively normal.
\end{lemma}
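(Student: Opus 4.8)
The plan is to reproduce, almost verbatim, the argument of Lemma \ref{key1}, with the ample line bundle $\Ol_{C_f}(qQ_\infty)$ replaced by $\Ol_{C_f}((tq+1)Q_\infty)$. First I would unwind Definition \ref{defpn} in the present situation. Because $\varphi$ is the embedding defined by $L((tq+1)Q_\infty)$, we have $\varphi^*\Ol_{\Pro^r}(1)=\Ol_{C_f}((tq+1)Q_\infty)$, and since $\varphi$ is an isomorphism onto $X_f$ this yields the identifications
$$H^0(\Pro^r,\Ol_{\Pro^r}(1))=L((tq+1)Q_\infty),\qquad H^0(X_f,\Ol_{X_f}(d))=L(d(tq+1)Q_\infty)$$
for every $d\ge 1$. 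Under these, the restriction map $\rho_{d,X_f}$ of Definition \ref{defpn} is exactly the symmetrized multiplication map $\rho_d:S^d(L((tq+1)Q_\infty))\to L(d(tq+1)Q_\infty)$. So it suffices to show that $\rho_d$ is surjective for every $d\ge 2$.

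The key technical point, precisely as in Lemma \ref{key1}, is to transfer surjectivity from the symmetric power to the tensor power, which is needed because we work in arbitrary characteristic. Here $S^d(L((tq+1)Q_\infty))$ is a quotient of $L((tq+1)Q_\infty)^{\otimes d}$ via a surjection $\eta_d$ (\cite{e}, \S A2.3), and the $d$-fold multiplication map $\tau_d:L((tq+1)Q_\infty)^{\otimes d}\to L(d(tq+1)Q_\infty)$ factors as $\tau_d=\rho_d\circ\eta_d$. Since $\eta_d$ is surjective, $\rho_d$ is surjective if and only if $\tau_d$ is. Hence I would reduce the whole statement to proving that $\tau_d$ is surjective for every $d\ge 2$.

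That last claim I would prove by induction on $d$. Writing $L:=L((tq+1)Q_\infty)$ and using $L^{\otimes d}=L\otimes L^{\otimes(d-1)}$, the map $\tau_d$ factors as $\tau_d=\mu_{d-1}\circ(\mathrm{id}_L\otimes\tau_{d-1})$, where $\mathrm{id}_L\otimes\tau_{d-1}$ multiplies the last $d-1$ factors together and $\mu_{d-1}:L\otimes L((d-1)(tq+1)Q_\infty)\to L(d(tq+1)Q_\infty)$ multiplies by the first factor. The base case is $\tau_2=\mu_1$, surjective by hypothesis; and if $\tau_{d-1}$ is surjective (induction) and $\mu_{d-1}$ is surjective (hypothesis), then $\tau_d$ is surjective. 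This completes the induction and the proof. I do not expect a genuine obstacle here, since all the arithmetic content is deferred to the hypothesis that each $\mu_s$ is surjective; the only subtlety demanding care is the positive-characteristic fact that $S^d$ is merely a quotient of, and not a subspace of, the tensor power, so the argument must be routed through $\tau_d$ rather than through $\rho_d$ directly.
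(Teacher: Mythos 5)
Your proof is correct and takes essentially the same approach as the paper: the paper's proof of Lemma \ref{key} consists of the single instruction ``Take the proof of Lemma \ref{key1}'', and your argument is precisely that proof transcribed with $qQ_\infty$ replaced by $(tq+1)Q_\infty$, including the key characteristic-$p$ precaution of working with the tensor-power maps $\tau_d$ rather than directly with the symmetric-power maps $\rho_d$.
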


\begin{proof} Take the proof of Lemma \ref{key1}.  \end{proof}

\begin{proposition}\label{grandi}
 If $s \ge m$ then $\mu_s$ is surjective.
\end{proposition}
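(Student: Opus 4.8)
The plan is to reduce $\mu_s$ to the multiplication map handled by Lemma \ref{trick}, in exact parallel with the proof of Proposition \ref{pr_grandi}. The key structural observation is that here the degree of the embedding line bundle is \emph{already} a multiple of the non-gap $m$: since $m$ divides $tq+1$ we have $tq+1 = cm$. This lets me write the weight $tq+1$ in the form $aq+bm$ required by the lemma simply by taking $a := 0$ and $b := c$. These are admissible choices, since $a \ge 0$, $b = c \ge 0$, and $a+b = c = (tq+1)/m \ge 1 > 0$.

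With this decomposition, setting $e := s(tq+1)$, the map $\mu_s$ of the statement coincides with the map $\mu$ of Lemma \ref{trick}: its source is $L((aq+bm)Q_\infty) \otimes L(eQ_\infty) = L((tq+1)Q_\infty) \otimes L(s(tq+1)Q_\infty)$, and its target is $L((e+aq+bm)Q_\infty) = L((s+1)(tq+1)Q_\infty)$, as desired. So the only thing left to do is to verify the numerical hypothesis of the lemma, namely $e \ge aq + bm + (m-1)(q-1) - 1$. Using $cm = tq+1$, the right-hand side equals $tq + (m-1)(q-1)$, so I need to check $s(tq+1) \ge tq + (m-1)(q-1)$.

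Since the left-hand side is increasing in $s$, it suffices to treat the smallest admissible value $s = m$, i.e. to confirm $m(tq+1) \ge tq + (m-1)(q-1)$. Expanding both sides, this inequality is equivalent to $q(m-1)(t-1) + 2m - 1 \ge 0$, which holds because $t \ge 1$ and $m \ge 2$. Once this is in place, Lemma \ref{trick} yields the surjectivity of $\mu_s$ for every $s \ge m$. I do not anticipate a genuine obstacle: the argument is entirely an application of Lemma \ref{trick}, and the only points requiring care are bookkeeping ones, namely checking that $tq+1 = aq+bm$ is an admissible decomposition and that the binding constraint over the range $s \ge m$ occurs at $s = m$.
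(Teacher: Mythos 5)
Your proof is correct and matches the paper's own argument exactly: the paper likewise proves this by applying Lemma \ref{trick} with $a:=0$, $b:=c$, $e:=s(tq+1)$. The only difference is that you spell out the verification of the numerical hypothesis $e \ge aq+bm+(m-1)(q-1)-1$, which the paper leaves to the reader.
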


\begin{proof}
 Apply Lemma \ref{trick} by setting $a:=0$, $b:=c$ and $e:=s(tq+1)$.
\end{proof}

In the following part of the section we focus on the case $f(x)=x^m$. In
particular we are going to show that $X_f$ curves obtained with this choice
of $f$ are projectively normal for any choice of $t \ge 1$, provided that $c \le
q-1$.

\begin{remark}
 The assumption $c \le q-1$ is not so restrictive from a geometric point of
view. In fact, for any fixed $q$, the genus of $X_f$ is $g=(q-1)(m-1)/2$. Even
if $c$ is small, here we study many curves of interesting genus. 
\end{remark}

\begin{lemma} \label{y^b}
Set $f(x):=x^m$ and assume $c \le q-1$. Pick out an integer $b \ge 0$
 such that $b \le (s+1)c$.
Then $y^b$ is in the image of $\mu_s$.
\end{lemma}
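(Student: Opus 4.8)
The plan is to exhibit $y^b$ as the image under $\mu_s$ of a decomposable tensor $y^{b_1}\otimes y^{b_2}$, thereby reducing the whole statement to an elementary splitting of the exponent $b$. The key observation is that membership in a space $L(NQ_\infty)$ is governed solely by the pole order at $Q_\infty$: the monomial basis recalled in the Introduction shows that $y$ has a single pole, of order $m$, at $Q_\infty$, so a power $y^a$ lies in $L(NQ_\infty)$ if and only if $ma\le N$. In particular $y^a\in L((tq+1)Q_\infty)$ exactly when $a\le c$ (recall $mc=tq+1$), and $y^a\in L(s(tq+1)Q_\infty)$ exactly when $a\le sc$.

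First I would set $b_1:=\min(b,c)$ and $b_2:=b-b_1$, both nonnegative. By construction $b_1\le c$, so $y^{b_1}\in L((tq+1)Q_\infty)$; moreover the hypothesis $c\le q-1$ forces $b_1\le q-1$, so that $y^{b_1}$ is in fact one of the basis monomials of $L((tq+1)Q_\infty)$ listed in the Introduction. Next I would verify that $b_2\le sc$ by distinguishing two cases: if $b\le c$ then $b_2=0\le sc$, while if $b>c$ then $b_1=c$ and $b_2=b-c\le (s+1)c-c=sc$, the inequality being exactly where the standing bound $b\le(s+1)c$ is consumed. In either case $y^{b_2}\in L(s(tq+1)Q_\infty)$.

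With the two factors placed in the respective source spaces of $\mu_s$, the conclusion is immediate: $\mu_s(y^{b_1}\otimes y^{b_2})=y^{b_1}\cdot y^{b_2}=y^{b_1+b_2}=y^b$, so $y^b$ lies in the image of $\mu_s$. There is no genuine obstacle here; the only point needing care is the case distinction above that keeps $b_2$ inside $[0,sc]$. I would finally remark that, although the reduced expression of $y^{b_2}$ in the monomial basis may involve $x$ once $b_2$ exceeds $q-1$ (through the relation $y^q=x^m-y$ arising from $f(x)=x^m$), this never affects the argument, since membership in $L(s(tq+1)Q_\infty)$ is detected by the pole order $mb_2$ alone.
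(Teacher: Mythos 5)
Your proof is correct, and it shares the same germ as the paper's: both arguments realize $y^b$ as $\mu_s$ applied to a decomposable tensor $y^{b_1}\otimes y^{b_2}$ with $b_1\le c$ and $b_2\le sc$. The difference lies in how the second factor is certified to belong to $L(s(tq+1)Q_\infty)$. Since $b_2$ may exceed $q-1$, the function $y^{b_2}$ need not appear among the basis monomials recalled in the Introduction, so its membership is not visible from the basis alone. The paper resolves this by induction on $s$: it writes $b=hc+\rho$ with $h\le s$ and $0\le \rho\le c$, invokes the lemma for $s-1$ to conclude that $y^{hc}$ lies in the image of $\mu_{s-1}$, and hence lies in $L(s(tq+1)Q_\infty)$ simply because that space is the codomain of $\mu_{s-1}$; in this way the paper never has to evaluate the pole order of a power $y^a$ with $a>q-1$. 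You instead observe that membership in the Riemann--Roch space $L(NQ_\infty)$ is governed by the valuation at $Q_\infty$: since $y$ has its only pole at $Q_\infty$, of order $m$, the function $y^{b_2}$ lies in $L(s(tq+1)Q_\infty)$ as soon as $mb_2\le s\cdot mc=s(tq+1)$, and no induction is needed. This valuation-theoretic shortcut is perfectly legitimate (it is exactly the fact underlying the basis description quoted from Stichtenoth), and your closing remark correctly disposes of the one subtle point, namely that rewriting $y^{b_2}$ via $y^q=x^m-y$ has no bearing on membership in the space. As a side benefit, your argument reveals that the hypothesis $c\le q-1$ is not really needed for this particular lemma: you invoke it only to note that $y^{b_1}$ is a basis monomial, whereas $mb_1\le mc=tq+1$ already suffices. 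What the paper's route buys, by contrast, is that it stays entirely inside the combinatorics of the exhibited monomial basis and the formal properties of the maps $\mu_s$, at the cost of an extra layer of recursion.
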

\begin{proof}
 Since $c \le q-1$ we get $y^c \in L((tq+1)Q_\infty)$. In particular, if $b \le
c$ then we are done. Assume $b>c$.
 Let us prove the lemma by induction on $s$. If $s=1$, then $b \le 2c$ and $b-c
\le c \le q-1$.
Hence $y^{b-c} \in L((tq+1)Q_\infty)$ and so $y^b=\mu_1(y^c \otimes y^{b-c})$ is of course
in the image of $\mu_1$.
If $s>1$, then 
write $b=hc+\rho$ with $h \le s$ and $0 \le \rho \le c$. Since $b-\rho=hc \le sc$ we have
that $y^{b-\rho}$
is in the image of $\mu_{s-1}$. In particular, it is in $L(s(tq+1)Q_\infty)$.
Since $y^\rho \in L((tq+1)Q_\infty)$, we get
$y^b=\mu_s(y^\rho \otimes y^{b-\rho})$. It follows that $y^b$ is in the image of $\mu_s$.
\end{proof}

\begin{theorem}\label{te2}
 Set $f(x)=x^m$ and assume  $c \le q-1$. Then $X_f$ is projectively normal.
\end{theorem}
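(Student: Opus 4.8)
The plan is to run the same reduction as in Section \ref{s1}. By Lemma \ref{key} it suffices to prove that $\mu_s$ is surjective for every $s\ge 1$, and Proposition \ref{grandi} already handles $s\ge m$. So I would fix $s$ with $1\le s<m$ and show that each basis monomial $x^iy^j$ of the target $L((s+1)(tq+1)Q_\infty)$ — that is, each pair $(i,j)$ with $i\ge 0$, $0\le j\le q-1$ and $qi+mj\le (s+1)(tq+1)$ — lies in the image of $\mu_s$. Since the image is a linear subspace it is enough to reach the monomials one at a time, and since $mc=tq+1$ and $c\le q-1$ the elements $1,x,\dots,x^t,y,\dots,y^c$ all belong to the first factor $L((tq+1)Q_\infty)$.

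Most monomials I would obtain by a direct factorization into a monomial of $L((tq+1)Q_\infty)$ times a monomial of $L(s(tq+1)Q_\infty)$, splitting cases by the size of $j$ and of $qi$. If $j\ge c$, peel off $y^c$ and write $x^iy^j=y^c\cdot x^iy^{j-c}$; here $x^iy^{j-c}\in L(s(tq+1)Q_\infty)$ because $qi+m(j-c)=(qi+mj)-(tq+1)\le s(tq+1)$. If instead $j\le c-1$ and $qi\le s(tq+1)$, peel off $y^j$ (which lies in $L((tq+1)Q_\infty)$ since $j<c$) leaving $x^i\in L(s(tq+1)Q_\infty)$. In the remaining situation $j\le c-1$ and $qi>s(tq+1)$, the inequality forces $i\ge t+1$, so whenever $qi+mj\le (s+1)(tq+1)-1$ one can peel off $x^t$: the cofactor $x^{i-t}y^j$ has weight $(qi+mj)-qt\le s(tq+1)$ and hence sits in $L(s(tq+1)Q_\infty)$.

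The one case no monomial factorization reaches — and the main obstacle — is the extremal one $qi+mj=(s+1)(tq+1)$ with $j\le c-1$; here the two pole-order budgets $tq+1$ and $s(tq+1)$ add up exactly to the total, leaving no slack. This is precisely where I would invoke the hypothesis $f(x)=x^m$, i.e.\ the Artin--Schreier relation $x^m=y^q+y$. First I would note that $\gcd(m,q)=1$ (a common prime factor would divide both $tq$ and $tq+1$), so from $qi=m\bigl((s+1)c-j\bigr)$ I deduce $q\mid (s+1)c-j$; writing $(s+1)c-j=q\ell$ gives $i=m\ell$ with $\ell\ge 1$. Then $x^i=(x^m)^\ell=(y^q+y)^\ell$, and expanding yields $x^iy^j=\sum_{r=0}^{\ell}\binom{\ell}{r}\,y^{\,j+\ell+r(q-1)}$. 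The largest exponent, attained at $r=\ell$, equals $j+\ell q=(s+1)c$, so every exponent that occurs is at most $(s+1)c$; by Lemma \ref{y^b} each such $y^b$ lies in the image of $\mu_s$, and therefore so does their $\F$-linear combination $x^iy^j$ (regardless of which binomial coefficients survive in characteristic $p$).

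Assembling the four cases shows that $\mu_s$ is surjective for all $s$, whence the theorem follows from Lemma \ref{key}. The only delicate points to verify carefully are the coprimality $\gcd(m,q)=1$ and the exact evaluation of the top exponent as $(s+1)c$, both of which rest on the identity $mc=tq+1$; everything else is the bookkeeping of the three elementary factorizations above.
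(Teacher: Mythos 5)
Your proposal is correct and follows essentially the paper's own strategy: reduce to surjectivity of the maps $\mu_s$ via Lemma \ref{key}, dispose of $s\ge m$ by Proposition \ref{grandi}, reach the non-extremal monomials by explicit factorizations, and handle the extremal case $qi+mj=(s+1)(tq+1)$ by substituting $x^m=y^q+y$ and invoking Lemma \ref{y^b}, exactly as in the paper's cases (B) and (D). The one difference worth noting is your choice of case split: by distinguishing $qi\le s(tq+1)$ from $qi>s(tq+1)$ (rather than splitting on the total weight $qi+mj$ as the paper does), you obtain $i\ge t+1$ from $s\ge 1$ alone, so your analysis is uniform in $s$, whereas the paper's corresponding deduction that $i\ge t$ uses $s\ge 2$ and therefore forces a separate treatment of $s=1$ with its own sub-cases; this is a mild but genuine streamlining of the same argument, and your supporting details (the coprimality $\gcd(m,q)=1$, which also follows at once from the standing assumption $(m,p)=1$, and the top exponent $j+\ell q=(s+1)c$ in the binomial expansion) all check out.
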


\begin{proof}
By Lemma \ref{key}, it is enough to show that $\mu_s$ is surjective for any $s
\ge 1$. By Proposition \ref{grandi}, we need only to prove that $\mu_s$ is
surjective for any $1 \le s<m$. Let $i,j$ be integers such that $i \ge 0$, $0 \le j \le
q-1$ and $qi+mj \le (s+1)(tq+1)$.
We will examine separately the case $2 \le s <m$ and the case $s=1$. 

To begin with, assume $2 \le s < m$.

\begin{itemize}
 \item If $j \ge c$, then $x^iy^{j-c} \in L(s(tq+1)Q_\infty)$. Since $y^c \in
L((tq+1)Q_\infty)$, we have $x^iy^j=\mu_s(y^c \otimes x^iy^{j-c})$.
\item If $0 \le j < c$ and $qi+mj \le s(tq+1)$, then $x^iy^j$ is in the image of
$\mu_s$, because $1 \in L((tq+1)Q_\infty)$.
\item Assume $0 \le j<c$ and $s(tq+1)<qi+mj \le (s+1)(tq+1)$. We have $i \ge t$.
Indeed, assume by contradiction that $i <t$. Then 

\begin{eqnarray*}
 qi+mj &<&  tq+mj \\ 
&<& tq+mc \\ &=& tq+tq+1 \\ &\le& stq+1  \\ &<&  s(tq+1), 
\end{eqnarray*}
a contradiction (here we used $s \ge 2$).
\begin{itemize}
\item[(A)] If $qi+mj < (s+1)(tq+1)$ then $x^{i-t}y^j \in L(s(tq+1)Q_\infty)$ and
$x^iy^j= \mu_s(x^t\otimes x^{i-t}y^{j})$.
\item[(B)] Assume $qi+mj=(s+1)(tq+1)$. Since $(m,p)=1$ we have $i=am$ for an
integer $a>0$ and $j= (s+1)c-aq$. Observe that
$x^iy^j=x^{am}y^{(s+1)c-aq}={(y^q+y)}^ay^{(s+1)c-aq}$, which is a sum of
monomials of the form $y^b$ with $b \le (s+1)c$. Apply Lemma \ref{y^b} and the
fact that $\mu_s$ is linear to get that $x^iy^j$ is in its image.
\end{itemize}
\end{itemize}

Now assume $s=1$.
\begin{itemize}
 \item Assume $j \ge c$. Since $qi+mj \le 2(tq+1)$ we get $qi+m(j-c) \le
2(tq+1)-(tq+1)=tq+1$. Hence $x^iy^{j-c} \in L((tq+1)Q_\infty)$. Finally,
$\mu_1(y^c \otimes x^iy^{j-c})=x^iy^j$.
\item Assume $j<c$ and $i \ge t$. Since $qi+mj \le 2(tq+1)$ we get $q(i-t)+mj
\le 2(tq+1)-tq=tq+2$.
\begin{itemize}
 \item[(C)] If $q(i-t)+mj \le tq+1$ then $x^iy^j=\mu_1(x^t\otimes x^{i-t}y^j)$.
\item[(D)] Assume $q(i-t)+mj=tq+2$, i.e. $qi+mj=2tq+2$. Repeat the proof of case
(B) with $s:=1$.
\end{itemize}
\item Assume $j<c$ and $i<t$. Then $x^i, y^j \in L((tq+1)Q_\infty)$ and we
easily get $x^iy^j = \mu_1(x^i\otimes y^j)$.
\end{itemize}
The proof is concluded.
\end{proof}

\begin{remark}
 If $t=1$ then the assumption $c \le q-1$ is trivially satisfied (we assumed $m
\neq q+1$). In this case the curve $y^q+y=x^m$ is covered by the Hermitian curve.
\end{remark}

\section*{Acknowledgment}
The authors would like to thank the Referee for suggestions and remarks which improved the presentation
of the present work.

\end{document}